\newtheorem{thm}{Theorem}
\newtheorem{lem}[thm]{Lemma}
\titleformat*{\section}{\large\bfseries}
\titleformat*{\subsection}{\normalsize\bfseries}
\title{\vspace{-2em}Regularity of Singular Solutions to \(p\)-Poisson Equations}
\author{Sullivan Francis MacDonald\footnote{University of Toronto, ON, Canada. sullivan@math.utoronto.ca. The author is a PhD student supported by the University of Toronto Department of Mathematics.}}
\date{September 2023}
\begin{document}
\maketitle

\begin{abstract}
This work showcases level set estimates for weak solutions to the \(p\)-Poisson equation on a bounded domain, which we use to establish Lebesgue space inclusions for weak solutions. In particular we show that if \(\Omega\subset\mathbb{R}^n\) is a bounded domain and \(u\) is a weak solution to the Dirichlet problem for Poisson's equation
\[
    \begin{array}{rllc}
    -\Delta u=&\!\!\!\!f&\textrm{in}&\Omega\\
    u=&\!\!\!\!0&\textrm{on}&\partial\Omega
    \end{array}
\]
for \(f\in L^q(\Omega)\) with \(q<\frac{n}{2}\), then \(u\in L^r(\Omega)\) for every \(r<\frac{qn}{n-2q}\) and indeed \(\|u\|_r\leq C\|f\|_q\). This result is shown to be sharp, and similar regularity is established for solutions to the \(p\)-Poisson equation including in the edge case \(q=\frac{n}{p}\).
\end{abstract}

\section{Introduction}

In this work we are concerned with global regularity of weak solutions to Dirichlet problems for the \(p\)-Poisson equation on a bounded domain \(\Omega\subset\mathbb{R}^n\) for \(n\geq 3\),
\begin{equation}\label{p-Poisson}
    \begin{array}{rllc}
    -\mathrm{{div}} (| \nabla u|^{p-2}\nabla u)=&\!\!\!\!f&\textrm{in}&\Omega\\
    u=&\!\!\!\!0&\textrm{on}&\partial\Omega.
    \end{array}
\end{equation}
In particular we are interested in the behaviour of solutions when \(f\in L^q(\Omega)\) for \(q\leq \frac{n}{p}\).

In the case \(p=2\) it is a well known consequence of the De Giorgi-Nash-Moser theory that if \(f\in L^q(\Omega)\) for \(q>\frac{n}{2}\) then \(u\) is bounded and H\"older continuous up to \(\partial\Omega\), see \cite{Mouhot2018DEGA} and the references therein. Similar regularity is established for the \(p\)-Poisson equation with sufficiently regular data in \cite{DIBENEDETTO1983827}. In the case of \(f\in L^q(\Omega)\) for small \(q\) solutions to \eqref{p-Poisson} can be unbounded, though some regularity results have been established. For instance, a comparison principle for unbounded solutions is proved by Lenori \& Porretta \cite{LEONORI20181492}, and Lindqvist \cite[Thm 5.11]{Lindqvist} shows that \(p\)-superharmonic functions retain some local integrability even if they are not bounded.

There are applications to modelling and numerical schemes in which unbounded solutions to problem \eqref{p-Poisson} arise naturally, see e.g. \cite{ASHYRALIYEV200820,IJNAM-14-500}, and studying solutions in this setting also demands refinement of techniques which may be useful in work on related problems. It is therefore worthwhile to pursue improved regularity results for singular solutions to \eqref{p-Poisson}. It has bee shown that if \(p=2\) and \(f\in L^q(\Omega)\) for \(1<q\leq\infty\) then one has \(u\in W^{2,q}(\Omega')\) for some \(\Omega'\subset\Omega\) \cite[(1.4)]{TEIXEIRA2013150}, though it turns out that \(u\) inherits additional regularity from \(f\). Our main contribution in this note is to establish the following improved Lebesgue space inclusions for \(u\) when \(q\) is small.

\begin{thm}\label{main}
Let \(u\) solve \eqref{p-Poisson} for \(f\in L^q(\Omega)\) with \(1< q<\frac{n}{p}\). If \(r<\frac{(p-1)qn}{n-pq}\) then \(u\in L^r(\Omega)\) and
\begin{equation}\label{Lp Bound}
    \|u\|_r\leq C\|f\|_q^\frac{1}{p-1}.
\end{equation}
If \(r>\frac{(p-1)qn}{n-pq}\) then there exists \(f\in L^q(\Omega)\) and a solution \(u\) to \eqref{p-Poisson} such that \(u\not\in L^r(\Omega)\). If \(q=\frac{n}{p}\) then estimate \eqref{Lp Bound} holds for every \(r<\infty\).
\end{thm}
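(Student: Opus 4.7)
The plan is to adapt Stampacchia's level-set method to the $p$-Poisson equation. Writing $A(t)=\{|u|>t\}$ and $\mu(t)=|A(t)|$, I would test the weak formulation of \eqref{p-Poisson} with the truncation $G_t(u)=(|u|-t)_+\mathrm{sgn}(u)\in W_0^{1,p}(\Omega)$ to obtain
\[
\int_{A(t)} |\nabla u|^p\,dx = \int_\Omega f\,G_t(u)\,dx.
\]
Applying the Sobolev inequality on the left and H\"older's inequality on the right, with $p^\ast=np/(n-p)$, this rearranges into the central level-set estimate
\[
(s-t)\,\mu(s)^{1/p^\ast} \leq C\,\|f\|_q^{1/(p-1)}\,\mu(t)^{(1/q'-1/p^\ast)/(p-1)}, \qquad 0<t<s.
\]

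The main step is a Stampacchia-type iteration of this inequality. The ansatz $\mu(t)\sim t^{-a}$ identifies the critical decay exponent as $a=(p-1)qn/(n-pq)$, exactly the threshold in \Cref{main}. Iterating rigorously yields $\mu(t)\leq C\|f\|_q^{p^\ast/(p-1)}t^{-a}$ once $t$ exceeds a baseline controlled by the a priori energy bound $\|u\|_{p^\ast}\leq C\|f\|_q^{1/(p-1)}$ obtained from testing \eqref{p-Poisson} with $u$ itself. The layer-cake formula $\|u\|_r^r = r\int_0^\infty t^{r-1}\mu(t)\,dt$ then delivers \eqref{Lp Bound} for every $r<a$.

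For sharpness I would use a radial example on a ball, considering a solution asymptotic to $|x|^{-\beta}$ for $\beta$ approaching $(n-pq)/((p-1)q)$ from below; a direct computation of $-\Delta_p u$ confirms $f\in L^q$ while $u\in L^r$ fails as soon as $r>n/\beta$, which exhausts all $r>a$. For the edge case $q=n/p$ the exponent $a$ diverges, so I would approximate: for each $q_0<n/p$ boundedness of $\Omega$ gives $\|f\|_{q_0}\leq |\Omega|^{1/q_0-p/n}\|f\|_{n/p}$, and applying the subcritical bound with $q_0$ chosen so that $(p-1)q_0 n/(n-pq_0)>r$ covers every finite $r$.

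The main obstacle is the self-improvement range of the level-set inequality: the H\"older step is only usable when $q'\leq p^\ast$, i.e.\ $q\geq np/(np-n+p)$, so for $q$ below this threshold a preliminary bootstrap is needed to upgrade the integrability of $u$ in stages before entering the main iteration. Managing constants so that \eqref{Lp Bound} retains the natural scaling $\|f\|_q^{1/(p-1)}$ across these steps is the technical core of the argument.
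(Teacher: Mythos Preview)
Your proposal is correct and, for the subcritical range $q<n/p$, follows essentially the paper's route: the level-set inequality you derive from testing with $G_t(u)$ is equivalent (after reshuffling exponents) to the paper's Lemma~2, which uses the bounded truncation $(u-\alpha)_+-(u-\beta)_+$ instead; both iterate to the same decay $\mu(t)\lesssim t^{-(p-1)qn/(n-pq)}$ and finish via the layer-cake formula, and both verify sharpness with an explicit radial power on the unit ball. The one genuine difference is the edge case $q=n/p$: the paper runs a separate, more delicate iteration with levels $\alpha=k\beta/(k+1)$ to obtain $\lambda_v(\beta)\leq (k^k/\beta^k)^{(p-1)n/(n-p)}$ for every $k$ and then optimizes in $k$, whereas your approximation trick---embedding $L^{n/p}(\Omega)\hookrightarrow L^{q_0}(\Omega)$ for $q_0<n/p$ via H\"older on the bounded domain and invoking the already-proved subcritical estimate---is shorter and equally valid, at the cost of a constant depending on $|\Omega|$. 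Your concern about $q<np/(np-n+p)$ is not an obstacle here: the paper imposes $q\geq np/(np-n+p)$ in its preliminaries as the hypothesis needed for the weak formulation itself to make sense.
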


The techniques used to prove Theorem \ref{main} employ ideas from both Moser and De Giorgi iterative schemes, and they can be used to reproduce many classical boundedness results in the setting of Lebesgue and Orlicz spaces. Moreover, the sharpness of our exponent implies that this result is optimal on the scale of Lebesgue spaces, though it is unclear whether solutions necessarily belong to \(L^r(\Omega)\) when \(r=\frac{(p-1)qn}{n-qp}\). We also remark that the level set estimates used to prove our main result have been established for many elliptic equations, see e.g. \cite{dgref}, and we foresee no major difficulty in extending Theorem \ref{main} to such problems.

The remainder of this paper is organized as follows. In Section 2 we state some preliminary definitions and results, and in Section 3 we present a streamlined form of De Giorgi iteration to achieve an estimate for the distribution function of weak solutions. In Section 4 we prove \eqref{Lp Bound} using this distribution estimate together with an iterative argument inspired by Moser iteration. Finally, in Section 5, we present examples which show that our main result cannot be improved.

\section{Preliminaries}

For \(1\leq q<\infty\) we define \(L^q(\Omega)\) in the usual way, and if \(f\) is measurable we define the distribution function of \(f\) by \(\lambda_f(\alpha)=m(\{x\in\Omega:|f(x)|>\alpha\})\), where \(m\) is Lebesgue measure. Further, we remind the reader of the identity
\begin{equation}\label{dist}
    \int_\Omega |f|^qdx=\int_0^\infty q\alpha^{q-1}\lambda_f(\alpha)d\alpha,
\end{equation}
which follows from Tonelli's Theorem provided that \(f\) is measurable.

The Sobolev space \(W^{1,p}_0(\Omega)\) is defined as the collection of weakly differentiable functions on \(\Omega\) which vanish on \(\partial\Omega\), and whose weak derivatives belong to \(L^p(\Omega)\). Since \(\Omega\) is a bounded open set by assumption, if \(n>p\) then for \(f\in W^{1,p}_0(\Omega)\) we have the Sobolev inequality \(\|f\|_{\frac{np}{n-p}}\leq C\|\nabla f\|_p\).

A function \(u\in W^{1,p}_0(\Omega)\) is said to be a weak solution to the \(p\)-Poisson equation \eqref{p-Poisson} if 
\begin{equation}\label{weak}
    \int_\Omega|\nabla u|^{p-2}\nabla u\cdot\nabla\varphi dx=\int_\Omega f\varphi dx    
\end{equation}
holds for every \(\varphi\in C_0^\infty(\Omega)\). Indeed, thanks to a standard density argument, if \(u\in W^{1,p}_0(\Omega)\) is a weak solution to \eqref{p-Poisson} then equation \eqref{weak} holds for every \(\varphi\in W_0^{1,p}(\Omega)\). Henceforth then, we use \(W_0^{1,p}(\Omega)\) as our space of test functions with the understanding that all derivatives in \eqref{weak} are taken in the weak sense.

It is necessary to assume some additional regularity of \(f\) for the integral on the right-hand side of \eqref{weak} to be finite for an arbitrary test function. Since \(\varphi\in W_0^{1,p}(\Omega)\) belongs to \(L^\frac{np}{n-p}(\Omega)\) by Sobolev's inequality, if we assume going forward that \(f\in L^q(\Omega)\) for \(q\geq \frac{np}{np-n+p}\) then convergence of the integrals in \eqref{weak} is assured by H\"older's inequality.

\section{Distribution Estimates}

Given that the norm of a function \(f\) can be computed if one knows its distribution function \(\lambda_f\) using \eqref{dist}, we aim to estimate the measure of level sets for solutions to \eqref{p-Poisson} at an arbitrary height. We do this recursively by employing the following result.

\begin{lem}\label{distbound}
Let \(u\) be a weak solution to \eqref{p-Poisson} with \(1<p<n\) and let \(f\in L^q(\Omega)\) for \(1<q\leq\infty\). There exists a constant \(C\) such that for any \(\beta>\alpha\),
\[
    \lambda_u(\beta)\leq \bigg(\frac{C\|f\|_q\lambda_u(\alpha)^{\frac{q-1}{q}}}{(\beta-\alpha)^{p-1}}\bigg)^\frac{n}{n-p}.    
\]
\end{lem}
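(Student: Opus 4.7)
The plan is to use a De~Giorgi-style truncated test function in the weak formulation \eqref{weak}. Since $\lambda_u(\alpha) = m(\Omega)$ for $\alpha<0$, I would restrict attention to $\alpha \geq 0$ and then bound the upper and lower super-level sets $\{u>\beta\}$ and $\{u<-\beta\}$ separately by symmetric arguments, summing at the end.

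First I would substitute the Lipschitz truncation
\[
    \varphi = \min\bigl((u-\alpha)_+,\,\beta-\alpha\bigr)
\]
into \eqref{weak}. This $\varphi$ lies in $W^{1,p}_0(\Omega)$, is supported on $A := \{u>\alpha\}$, and satisfies $\|\varphi\|_\infty \leq \beta-\alpha$, with $\nabla\varphi = \nabla u\cdot\chi_{\{\alpha<u<\beta\}}$. The left-hand side of \eqref{weak} therefore collapses to $\int_\Omega|\nabla\varphi|^p\,dx$, while H\"older's inequality and the uniform bound on $\varphi$ yield
\[
    \int_\Omega f\varphi\,dx \leq \|f\|_q\|\varphi\|_{q'} \leq \|f\|_q(\beta-\alpha)\,m(A)^{(q-1)/q}.
\]

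Next I would apply the Sobolev inequality from Section~2 to obtain $\|\varphi\|_{p^*}^p \leq C\|\nabla\varphi\|_p^p$, with $p^* = \tfrac{np}{n-p}$. Crucially, $\varphi \equiv \beta-\alpha$ on $\{u>\beta\}$, so $\|\varphi\|_{p^*} \geq (\beta-\alpha)\,m(\{u>\beta\})^{1/p^*}$. Chaining these estimates gives
\[
    (\beta-\alpha)^p\,m(\{u>\beta\})^{p/p^*} \leq C\|f\|_q(\beta-\alpha)\,m(A)^{(q-1)/q},
\]
and since $p/p^* = (n-p)/n$, cancelling one factor of $(\beta-\alpha)$ and raising to the power $n/(n-p)$ produces the desired bound for $m(\{u>\beta\})$ (using $m(A)\leq\lambda_u(\alpha)$). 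Re-running the argument with $\varphi = -\min\bigl((-u-\alpha)_+,\,\beta-\alpha\bigr)$ supplies the matching estimate for $m(\{u<-\beta\})$; summing absorbs a harmless factor of~$2$ into $C$.

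The main subtlety, and essentially the only real choice in the proof, is the form of the test function. A naive $\varphi = (u-\alpha)_+$ forces one to bound $\|\varphi\|_{q'}$ by interpolating between $\|\varphi\|_{p^*}$ and the measure of the support, which consumes part of the Sobolev estimate and ultimately yields the wrong exponent of $\lambda_u(\alpha)$ (and also imposes a spurious lower bound $q \geq np/(np-n+p)$). Truncating at height $\beta-\alpha$ decouples the two uses of $\varphi$ and produces exactly the clean $(q-1)/q$ exponent appearing in the claim; beyond this observation the remainder is a routine application of H\"older and Sobolev.
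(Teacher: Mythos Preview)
Your proposal is correct and follows essentially the same route as the paper: your truncation $\min\bigl((u-\alpha)_+,\,\beta-\alpha\bigr)$ is literally the same function as the paper's $\varphi=(u-\alpha)_+-(u-\beta)_+$, and the chain Sobolev~$\to$ weak formulation~$\to$ H\"older is identical. If anything you are slightly more careful, since you explicitly treat the negative part needed for $\lambda_u(\beta)=m(\{|u|>\beta\})$, whereas the paper's written argument only bounds $m(\{u>\beta\})$.
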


Our proof of this result employs a similar technique to De Giorgi iteration, though by using a modified test function we condense the standard argument. If one assumes that \(f\) belongs to the Orlicz space \(L^\Psi(\Omega)\) for any Young function \(\Psi\), the argument which we give below can be used to prove that
\[
    \lambda_u(\beta)\leq \bigg(\frac{C\|f\|_\Psi\overline{\Psi}^{-1}(\lambda_u(\alpha)^{-1})^{-1}}{(\beta-\alpha)^{p-1}}\bigg)^\frac{n}{n-p}.    
\]
This estimate allows one to prove boundedness of weak solutions under appropriate hypotheses.

\begin{proof}
Let \(u\in W_0^{1,p}(\Omega)\) be a weak solution to \eqref{p-Poisson} and fix \(\beta>\alpha\). Then define the test function
\[
    \varphi=(u-\alpha)_+-(u-\beta)_+
\]
so that \(\varphi\in W^{1,p}_0(\Omega)\) by the chain rule and a density argument. Moreover we have \(0\leq \varphi\leq \beta-\alpha\) and \(\varphi\) is nonzero only on the level set \(\{x\in\Omega:u(x)>\alpha\}\). For brevity we let \(\chi_\alpha\) denote the indicator of this set and we note that \(\nabla\varphi=\chi_\alpha\nabla u\) in the weak sense.

To estimate $\lambda_u(\beta)$ we begin by observing that
\begin{equation}\label{a}
    (\beta-\alpha)\lambda_u(\beta)^\frac{n-p}{pn}=\bigg(\int_{\{u>\beta\}}(\beta-\alpha)^\frac{pn}{n-p}dx\bigg)^\frac{n-p}{pn}\leq \bigg(\int_{\Omega}|\varphi|^\frac{pn}{n-p}dx\bigg)^\frac{n-p}{pn}=\|\varphi\|_\frac{pn}{n-p}.
\end{equation}
Our aim is to estimate the norm on the right-hand side in terms of \(\lambda_u(\alpha)\), to obtain a recursive estimate for \(\lambda_u\). Since $\varphi\in W^{1,p}_0(\Omega)$ we can use it as a test function in \eqref{weak} to get
\[
    \|\nabla\varphi\|_p^p=\int_\Omega|\nabla\varphi|^{p}dx=\int_\Omega|\nabla u|^{p}\chi_\alpha dx=\int_\Omega f\varphi dx.
\]
It follows from the Sobolev inequality, the estimate above, and H\"older's inequality that for \(q>1\),
\begin{equation}\label{b}
    \|\varphi\|_\frac{pn}{n-p}^p\leq C\int_\Omega f\varphi\chi_\alpha dx \leq C(\beta-\alpha)\int_\Omega|f|\chi_\alpha dx\leq C(\beta-\alpha)\|f\|_q\lambda_u(\alpha)^{1-\frac{1}{q}}.    
\end{equation}
Finally, combining equations \eqref{a} and \eqref{b} we get
\[
    (\beta-\alpha)\lambda_u(\beta)^\frac{n-p}{pn}\leq \|\varphi\|_\frac{pn}{n-p}\leq C(\beta-\alpha)^\frac{1}{p}\|f\|_q^\frac{1}{p}\lambda_u(\alpha)^{\frac{q-1}{pq}}.    
\]
Rearranging this gives the claimed distribution estimate.
\end{proof}

\section{Proof of Theorem \ref{main}}

For simplicity, we first take \(C\) as in Lemma \ref{distbound} and replace \(u\) with \(v=(C\|f\|_q)^{-\frac{1}{p-1}}u\) so that
\[
    \lambda_v(\beta)\leq \bigg(\frac{\lambda_v(\alpha)^{\frac{q-1}{q}}}{(\beta-\alpha)^{p-1}}\bigg)^\frac{n}{n-p}.
\]
We also assume without loss of generality that \(m(\Omega)= 1\). We wish to bound \(\lambda_v(\beta)\) by a function of \(\beta\), and then employ this pointwise estimate in \eqref{dist}. To this end we find successive approximations of \(\lambda_v\) by first defining \(\lambda_0(\beta)=m(\Omega)=1\), and for \(k\geq 1\) recursively defining
\[
    \lambda_{k+1}(\beta)=\inf_{0\leq\alpha<\beta}\bigg(\frac{\lambda_k(\alpha)^{\frac{q-1}{q}}}{(\beta-\alpha)^{p-1}}\bigg)^\frac{n}{n-p}.
\]
It follows by induction and an application of Lemma \ref{distbound} that \(\lambda_v(\beta)\leq \lambda_k(\beta)\) for each \(k\in\mathbb{N}\), and in particular we have for all \(\beta\geq 0\) that
\[
    \lambda_v(\beta)\leq \min\bigg\{1,\lim_{k\rightarrow\infty}\lambda_k(\beta)\bigg\}.
\]

To estimate the right-hand, we make a crude (but adequate) approximation by choosing \(\alpha=\frac{\beta}{2}\) in the infimum defining \(\lambda_k(\beta)\), so that
\begin{equation}\label{recur}
    \lambda_{k+1}(\beta)\leq \bigg(\frac{2}{\beta}\bigg)^{(p-1)(\frac{n}{n-p})}\lambda_k\bigg(\frac{\beta}{2}\bigg)^{(\frac{q-1}{q})(\frac{n}{n-p})}.
\end{equation}
Fixing \(\ell=\frac{(q-1)n}{q(n-p)}\), we now assume that \(q<\frac{n}{p}\) to ensure \(\ell<1\), and we argue by induction that
\[
    \lambda_k(\beta)\leq \bigg(\frac{2}{\beta}\bigg)^{\textstyle\frac{(p-1)q}{q-1}\sum\limits_{j=1}^k \ell^j}
\]
for each \(k\in\mathbb{N}\). The base case follows at once from taking \(k=0\) in \eqref{recur} and using that \(\lambda_0(\beta)=1\) for all \(\beta\geq 0\). For the inductive step we suppose that the claimed estimate holds up to \(k\) and we observe that
\[
    \lambda_{k+1}(\beta)\leq \bigg(\frac{2}{\beta}\bigg)^{\textstyle(p-1)(\frac{n}{n-p})}\lambda_k\bigg(\frac{\beta}{2}\bigg)^{\textstyle\ell}\leq \bigg(\frac{2}{\beta}\bigg)^{\textstyle\frac{(p-1)q}{q-1}\ell}\bigg(\frac{2}{\beta}\bigg)^{\textstyle\frac{(p-1)q}{q-1}\ell\sum\limits_{j=1}^k \ell^j}=\bigg(\frac{2}{\beta}\bigg)^{\textstyle\frac{(p-1)q}{q-1}\sum\limits_{j=1}^{k+1} \ell^j}.
\]
Thus the claimed estimate holds for each \(k\in\mathbb{N}\). Since we can take \(k\) as large as we like and \(\ell<1\), the series in the power converges to \(\frac{\ell}{1-\ell}\) as \(k\rightarrow\infty\). It follows that
\[
    \lim_{k\rightarrow\infty}\lambda_k(\beta)\leq \bigg(\frac{2}{\beta}\bigg)^{\textstyle\frac{(p-1)q}{q-1}(\frac{\ell}{1-\ell})}=\bigg(\frac{2}{\beta}\bigg)^{\textstyle\frac{(p-1)qn}{n-qp}}.
\]

Altogether then, we find the following estimate on the distribution function \(\lambda_b\) for  \(\beta\geq 0\):
\[
    \lambda_v(\beta)\leq \min\bigg\{1,\bigg(\frac{2}{\beta}\bigg)^{\textstyle\frac{(p-1)qn}{n-qp}}\bigg\}.
\]
Using this estimate we now study \(L^r(\Omega)\) regularity of \(v\) and \(u\). From identity \eqref{dist} we observe that
\[
    \int_\Omega|v|^{\textstyle r}dx=r\int_0^\infty\beta^{\textstyle r-1}\lambda_v(\beta)d\beta\leq C+C\int_2^\infty\beta^{\textstyle r-1-\frac{(p-1)qn}{n-qp}}d\beta.
\]
The integral on the right-hand side converges whenever \(1\leq r<\frac{(p-1)qn}{n-qp}\), showing that if \(r\) is within that range then \(\|v\|_r\leq C(n,p,q,r)\). Recalling the definition of \(v\), it follows that for the same \(r\) we have
\[
    \|u\|_r\leq C\|f\|_q^\frac{1}{p-1},
\]
giving the claimed bound when \(q<\frac{n}{p}\).

It remains to treat the case \(q=\frac{n}{p}\). In this instance we have \(\ell=1\) and our recursive definition for \(\lambda_k\) introduced above reads
\[
    \lambda_{k+1}(\beta)=\inf_{0\leq\alpha<\beta}\frac{\lambda_k(\alpha)}{(\beta-\alpha)^{(p-1)(\frac{n}{n-p})}}.
\]
We claim that thanks to this recursion, the following bound for \(\lambda_k\) holds for each \(k\in\mathbb{N}\):
\[
    \lambda_k(\beta)\leq \bigg(\frac{k^{k}}{\beta^{k}}\bigg)^{(p-1)(\frac{n}{n-p})}.
\]
Once again the base case follows immediately from the recursive definition of \(\lambda_1\) and the fact that \(\lambda_0(\beta)=1\). Assuming \(\lambda_k\) satisfies the claimed estimate, we observe that
\[
    \lambda_{k+1}(\beta)=\inf_{0\leq\alpha<\beta}\frac{\lambda_k(\alpha)}{(\beta-\alpha)^{(p-1)(\frac{n}{n-p})}}\leq \inf_{0\leq\alpha<\beta}\bigg(\frac{k^k}{\alpha^k(\beta-\alpha)}\bigg)^{(p-1)(\frac{n}{n-p})}.
\]
Since the function \(\alpha\mapsto\alpha^k(\beta-\alpha)\) attains a maximum of \(\frac{k^k\beta^{k+1}}{(k+1)^{k+1}}\) on \([0,\infty)\) at \(\alpha=\frac{k\beta}{k+1}\), the infimum on the right-hand side above can be calculated to give the estimate
\[
    \lambda_{k+1}(\beta)\leq \inf_{0\leq\alpha<\beta}\bigg(\frac{k^k(k+1)^{k+1}}{k^k\beta^{k+1}}\bigg)^{(p-1)(\frac{n}{n-p})},
\]
giving the required bound for \(k\in\mathbb{N}\). It follows now that for each \(\beta\geq 0\) we have
\[
    \lambda_v(\beta)\leq \min\bigg\{1,\min_{k\in\mathbb{N}}\bigg(\frac{k^{k}}{\beta^{k}}\bigg)^{(p-1)(\frac{n}{n-p})}\bigg\}.
\]
Indeed, for each fixed \(k\geq 1\) and \(\beta\) belonging to the interval \(I_k=\big[\frac{k^{k}}{(k-1)^{k-1}},\frac{(k+1)^{k+1}}{k^k}\big)\) this minimum is achieved by the function
\[
    \bigg(\frac{k^{k}}{\beta^{k}} \bigg)^{(p-1)(\frac{n}{n-p})}.
\]
Consequently we have the following pointwise bound for our distribution function:
\[
    \lambda_v(\beta)\leq \chi_{[0,1)}+\sum_{k=1}^\infty\chi_{I_k}\bigg(\frac{k^{k}}{\beta^{k}} \bigg)^{(p-1)(\frac{n}{n-p})}.
\]
Applying this estimate and using Fatou's lemma to interchange a limit and integral, we see that
\[
    \int_\Omega|v|^rdx\leq C+C\int_0^\infty\frac{\beta^r}{\beta}\bigg(\sum_{k=1}^\infty\chi_{I_k}\bigg(\frac{k^{k}}{\beta^{k}} \bigg)^{(p-1)(\frac{n}{n-p})}\bigg)d\beta\leq C+C\sum_{k=1}^\infty\int_{I_k}\frac{\beta^r}{\beta}\bigg(\frac{k^{k}}{\beta^{k}} \bigg)^{(p-1)(\frac{n}{n-p})}d\beta.
\]
Each \(I_k\) has a length of no more than three (indeed, \(m(I_k)\rightarrow e\) as \(k\rightarrow\infty\)), and on each \(I_k\) the integrand attains a maximum value at the left endpoint. Evaluating at this point we find that
\[
    \int_\Omega|v|^rdx\leq C+C\sum_{k=1}^\infty\left(\frac{k^k}{(k-1)^{k-1}}\right)^{r-1}\left(\frac{k-1}{k}\right)^{\left(k^{2}-k\right){(p-1)(\frac{n}{n-p})}}\leq C\sum_{k=1}^\infty k^{r-1}e^{-k{(p-1)(\frac{n}{n-p})}}.
\]
For any fixed \(r<\infty\) the series on the right converges, implying that there exists a constant \(C\) for which \(\|v\|_r\leq C\) and \(\|u\|_r^{p-1}\leq C\|f\|_q\). \hfill\qedsymbol{}

We remark that the final estimate does not imply that \(u\in L^\infty(\Omega)\), since the constant \(C\) may blow up as \(r\rightarrow\infty\) for some solutions.

\section{Sharpness of Theorem 1}

Finally, we verify that the exponent \(\frac{(p-1)qn}{n-pq}\) appearing in Theorem \ref{main} cannot be increased.

\begin{lem}
Let \(\varepsilon>0\) be given and assume that \(q<\frac{n}{p}\) for \(n\geq 3\) and \(p>1\). Fix \(\Omega=B(0,1)\subset\mathbb{R}^n\). There exist functions \(f\) and \(u\) satisfying \eqref{p-Poisson} such that \(f\in L^q(\Omega)\) and 
\begin{equation}\label{blowup}
    u\not\in L^{\frac{(p-1)qn}{n-pq}+\varepsilon}(\Omega).
\end{equation}
\end{lem}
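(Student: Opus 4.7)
The plan is to construct an explicit radial counterexample. For a parameter \(\gamma>0\) to be chosen below, set
\[
    u(x)=|x|^{-\gamma}-1,\qquad x\in\Omega=B(0,1),
\]
which satisfies \(u=0\) on \(\partial\Omega\). Using the radial divergence identity \(\mathrm{div}(F(r)\hat{x})=r^{1-n}(r^{n-1}F(r))'\) and computing pointwise away from the origin, one finds
\[
    f(x):=-\mathrm{div}(|\nabla u|^{p-2}\nabla u)=\gamma^{p-1}\bigl[n-1-(\gamma+1)(p-1)\bigr]\,|x|^{-1-(\gamma+1)(p-1)}.
\]
The bracketed constant vanishes precisely at the fundamental \(p\)-harmonic exponent \(\gamma=(n-p)/(p-1)\), which I would exclude from my selection.

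Next I would determine the two relevant integrability thresholds by integrating in spherical shells. The condition \(f\in L^q(\Omega)\) reduces to convergence of \(\int_0^1 r^{n-1-q[(\gamma+1)(p-1)+1]}\,dr\), which holds exactly when
\[
    \gamma<\gamma^{*}:=\frac{n-pq}{q(p-1)}.
\]
Analogously, \(u\in L^r(\Omega)\) iff \(r<n/\gamma\). Observe that \(n/\gamma^{*}\) is precisely the critical exponent \((p-1)qn/(n-pq)\) appearing in Theorem~\ref{main}; this matching of exponents is the source of sharpness.

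To conclude, set \(r_0:=(p-1)qn/(n-pq)+\varepsilon\). Since \(n/\gamma^{*}<r_0\), the interval \((n/r_0,\gamma^{*})\) is nonempty, so I can pick \(\gamma\) from it (perturbing slightly if needed to avoid the excluded value \((n-p)/(p-1)\)). Then \(f\in L^q(\Omega)\) by the first threshold, while \(r_0>n/\gamma\) forces \(u\notin L^{r_0}(\Omega)\), establishing \eqref{blowup}. The one delicate point—really the only place where care is required—is verifying that the pointwise identity for \(f\) extends to a distributional (or very weak) solution of \eqref{p-Poisson} on all of \(\Omega\), which amounts to showing \(|\nabla u|^{p-1}\in L^1_{\mathrm{loc}}(\Omega)\); this follows from the inequality \(\gamma^{*}<(n-p+1)/(p-1)\), which reduces to \(n/q<n+1\) and therefore holds whenever \(q\geq 1\).
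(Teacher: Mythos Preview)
Your construction is essentially the paper's: both take a radial power $u=c(|x|^{-\gamma}-1)$ on the unit ball and read off the integrability thresholds for $u$ and $f$; the paper parametrises by the exponent of $f$ (their $\ell$) and tunes it so that the $L^{r_0}$-integral diverges exactly logarithmically, while you parametrise by $\gamma$ and pick it from the open interval $(n/r_0,\gamma^*)$, which is marginally cleaner but the same example.

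One small correction to your closing remark: the condition $|\nabla u|^{p-1}\in L^1_{\mathrm{loc}}$ does not by itself rule out a Dirac mass at the origin---the fundamental solution at $\gamma=(n-p)/(p-1)$ already satisfies it---and to match the paper's notion of weak solution you in fact need $u\in W^{1,p}_0(\Omega)$, i.e.\ $\gamma<(n-p)/p$. This is automatic, however, since for $q>1$ one has $\gamma^*<(n-p)/(p-1)$, and under the paper's standing hypothesis $q\ge np/(np-n+p)$ one even has $\gamma^*\le (n-p)/p$; so your choice $\gamma<\gamma^*$ already delivers both, and the argument goes through.
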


\begin{proof} 
Given \(\varepsilon>0\), set \(\ell=\frac{\varepsilon(\frac{n}{q}-p)^2}{(p-1)n+\varepsilon(\frac{n}{q}-p)}>0\) and \(f(x)=-|x|^{\ell-\frac{n}{q}}\). Since \(f\) is radial, we have
\[
    \int_{B(0,1)}|f(x)|^qdx=|S^{n-1}|\int_0^1 r^{n-1}|f(r)|^qdr=|S^{n-1}|\int_0^1 r^{\ell q-1}dr.
\]
The integral on the right converges since \(\ell q>0\), meaning that \(f\in L^q(\Omega)\). Next we define
\[
    u(x)=\frac{(p-1)q^\frac{p}{p-1}}{(nq-n+\ell q)^\frac{1}{p-1}(pq-n+\ell q)} (|x|^\frac{pq-n+\ell q}{(p-1)q}-1).
\]
It is easily verified by direct differentiation that \(-\mathrm{{div}} (| \nabla u|^{p-2}\nabla u)=-|x|^{\ell-\frac{n}{2}}\) for \(x\in B(0,1)\), and if \(x\in S^{n-1}\) then \(u(x)=0\), meaning that \(u\) solves \eqref{p-Poisson}.

It remains to study the integrability of \(u\). From our choice of the constant \(\ell\) it follows that
\[
    \frac{(p-1)qn}{n-pq-\ell q}=\frac{(p-1)qn}{n-pq}+\varepsilon,
\]
and since \(u\) is a radial function on the ball we can integrate in polar coordinates to see that
\[
    \int_{B(0,1)}|u(x)|^{\frac{(p-1)qn}{n-pq}+\varepsilon}dx\geq C\bigg(\int_0^1 r^{n-1+\frac{pq-n+\ell q}{(p-1)q}(\frac{(p-1)qn}{n-pq}+\varepsilon)}dr-1\bigg)=C\bigg(\int_0^1 \frac{dr}{r}-1\bigg)=\infty.
\]
This gives \eqref{blowup}, showing that our example has the claimed properties and Theorem \ref{main} is sharp.
\end{proof}

\bibliographystyle{plain}
\pagestyle{plain}
\bibliography{references}
\end{document}